\definecolor{darkgreen}{rgb}{0.,0.5,0.}
\DeclareMathOperator{\CF}{CF}
\numberwithin{equation}{section} \overfullrule 5pt
\newtheorem{thm}{Theorem}[section]
\newtheorem{prop}[thm]{Proposition}
\newtheorem{lem}[thm]{Lemma}
\theoremstyle{definition}
\newtheorem{remark}{Remark}
\newcommand{\ff}{\mathbb{F}_2}
\title[
Period-doubling Continued Fractions are Algebraic in Characteristic $2$
]
{
Period-doubling Continued Fractions are  Algebraic in Characteristic $2$}
\date{}
\author{Yining Hu}
\address[Yining Hu, corresponding author]{School of Mathematics and Statistics, 
Huazhong University of Science and Technology, Wuhan, PR China}
\email{huyining@protonmail.com}
\author{Alain Lasjaunias}
\address[Alain Lasjaunias, (ex-member)]{
     Mathematical Institute Bordeaux University, Bordeaux, France.
}
\email{lasjauniasalain@gmail.com}
\subjclass[2010]{11B85, 11J70, 11B50, 11Y65, 05A15, 11T55}
\keywords{algebraicity, automatic sequence, continued fraction, period-doubling sequence}
\begin{document}
\begin{abstract}
	Considering an arbitrary pair of distinct and non constant polynomials, 
	$a$ and $b$ in $\mathbb{F}_2[t]$, we build a continued fraction in 
	$\mathbb{F}_2((1/t))$ whose partial quotients are only equal to $a$ or $b$.
	In a previous work of the first author and Han (to appear in Acta Arithmetica), the authors considered two 
	cases where the sequence of partial quotients represents in each case a 
	famous and basic $2$-automatic sequence, both defined in a similar way by 
	morphisms. They could prove the algebraicity of the corresponding continued 
	fractions for several pairs $(a,b)$ in the first case (the Prouhet-Thue-Morse
	sequence) and gave the proof for a particular pair for the second case (the period-doubling 
	sequence). Recently Bugeaud and Han (arXiv:2203.02213) proved the 
	algebraicity for an arbitrary pair in the first case.
	Here we give a short proof for an arbitrary pair in the second 
	case.
\end{abstract}

\maketitle

\section{Introduction}
	Let $\mathbb{F}_2((1/t))$ be the 
	field of power series in $1/t$, where t is a formal indeterminate, over the finite 
	field $\mathbb{F}_2$. A non-zero element is $\alpha=t^n+\sum_{i<n} a_i\cdot t^i$ 
	where $n$ belongs to $\mathbb{Z}$ and $a_i=0$ or $1$. An absolute value on $\mathbb{F}_2((1/t))$ is 
	defined by $|0|=0$ and $|\alpha|=|t|^n$ where $|t|>1$ is a fixed given real 
	number .

	So $\mathbb{F}_2((1/t))$ is the completion of $\mathbb{F}_2(t)$ for this absolute value. Every 
	irrational (resp. rational) element in $\mathbb{F}_2((1/t))$ can be expanded in an infinite (resp. finite)
	continued fraction: $\alpha=[a_0;a_1,...,a_n,...]$ where the $a_i$ are in 
	$\mathbb{F}_2[t]$ and $\deg(a_i)>0$ for $i >0$. For a basic introduction on fields of 
	power series and continued fractions, the reader may consult 
	\cite{lasjaunias2017}.

  The origin of the question discussed here is due to G.-N. Han and the first
	author. 
	In \cite{Hu2022H}, the authors considered two basic sequences, formed by an 
	infinite word with two letters $a$ and $b$, belonging to the family of 
	$2$-automatic sequences (see \cite{Allouche2003Sh} p.173 and p.176) . Both 
	sequences are obtained in a similar way, as fixed point of a morphism. For 
	the first one, the $(a;b)$-Prouhet-Thue-Morse sequence, denoted by $\mathbf{t}$,
  the morphism $\tau$ is defined by $\tau(a)=ab$ and $\tau(b)= ba$, and we have

     $$\mathbf{t}=\tau^{\infty}(a) = abbabaabbaab\ldots$$

  Note that this famous sequence was considered very long ago and has been the 
	starting point of various studies. For the second one , the 
	$(a;b)$-period-doubling sequence, denoted by $\mathbf{p}$, the morphism $\sigma$ is 
	defined by $\sigma(a)=ab$ and $\sigma(b)=aa$, and we have

     $$\mathbf{p}=\sigma^{\infty} (a) = abaaabababaaabaa\ldots$$

  In both cases, we consider a pair $(a,b)$ of distinct and non-constant 
	polynomials in $\mathbb{F}_2[t]$, and we can associate with each sequence an infinite 
	continued fraction in $\mathbb{F}_2((1/t))$, $\CF(\mathbf{t})$ and $\CF(\mathbf{p})$, where the sequence of 
	partial quotients is derived from the sequences $\mathbf{t}$ and $\mathbf{p}$ :  
	$$\alpha_\mathbf{t}=\CF(\mathbf{t})=[a;b,b,a,b,a,a,\ldots]$$ and 
	$$\alpha_\mathbf{p}=\CF(\mathbf{p})=[a;b,a,a,a,b,a,\ldots]. $$
 
	It was proved in \cite{Hu2022H} that, for pairs $(a,b)$ with 
	$\deg a +\deg b\leq 7$, $\alpha_\mathbf{t}$ was a root of a polynomial of degree $4$, with five 
  coefficients in $\mathbb{F}_2[t]$ depending on $(a,b)$. In a recent work, extending 
	the case of $\CF(\mathbf{t})$, Bugeaud and Han \cite{Bugeaud2022H} 
  obtained the same result for all pairs $(a,b)$ and they gave the explicit 
	formulas for the $5$ coefficients as polynomials in $a$ and $b$.

	Also in \cite{Hu2022H}, it was proved that for $(a,b)=(t^3,t^2+t+1)$  we have 

	$$ (t^5+t^3+t^2)\cdot\alpha_\mathbf{p}^4+(t^8+t^6+t^5+t^3)\cdot\alpha_\mathbf{p}^3+(t^5+t^4+t^3)\cdot\alpha_\mathbf{p}^2+1=0.$$ 

  In this note, we give a short proof of the general case for the sequence $\mathbf{p}$ 
	with the following theorem, thus  confirming Conjecture 1.5 from 
	\cite{Hu2022H}.

\begin{thm}\label{thm:main}
	Let $a,b$ be two distinct non constant elements in $\mathbb{F}_2[t]$.
	Let $\alpha_\mathbf{p}=[\sigma^\infty(a)]\in \mathbb{F}_2((1/t))$. 
	Define $P(x)\in \mathbb{F}_2(t)[x]$ to be
\begin{equation}\label{eq:pd}
	P(x)=Ax^4+Bx^3+Cx^2+1
\end{equation}
with
\begin{equation*}
	A=ab+b^2+1,\quad
	B=ab(a+b),\quad
	C=ab.
\end{equation*}
	Then $P(\alpha_\mathbf{p})=0$.
\end{thm}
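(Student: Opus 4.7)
The plan is to use the fixed-point property $\sigma(\mathbf{p})=\mathbf{p}$ translated into a matrix recursion in characteristic $2$. With $M(x)=\begin{pmatrix} x & 1 \\ 1 & 0 \end{pmatrix}$, define $A_k$ and $B_k$ as the products of letter-matrices along the words $\sigma^k(a)$ and $\sigma^k(b)$ respectively, so that $A_k\cdot\infty$ (via the M\"obius action) is the convergent of $\alpha_\mathbf{p}$ at position $2^k-1$. From $\sigma(a)=ab$ and $\sigma(b)=aa$ one reads off the recursion $A_{k+1}=A_kB_k$ and $B_{k+1}=A_k^2$, which combine to $A_{k+2}=A_{k+1}A_k^2$. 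Each $A_k$ lies in $\mathrm{SL}_2(\mathbb{F}_2[t])$ since $\det M(x)=-1=1$ in $\mathbb{F}_2$, so Cayley--Hamilton yields $A_k^2=\operatorname{tr}(A_k)\,A_k+I$ and therefore
\[
A_{k+2}=\operatorname{tr}(A_k)\,A_{k+1}A_k+A_{k+1}.
\]

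Writing $A_k=\begin{pmatrix} p_k & * \\ q_k & * \end{pmatrix}$, the aim is to show that the homogenized quartic form
\[
F(u,v)=Au^4+Bu^3v+Cu^2v^2+v^4,
\]
with $A,B,C$ as in the statement, satisfies $|F(p_k,q_k)|<|q_k|^4$ (in the $(1/t)$-adic metric) for all sufficiently large $k$; passing to the limit then gives $P(\alpha_\mathbf{p})=0$. A guiding algebraic constraint is the direct factorization (verified by expansion in $\mathbb{F}_2[t][x]$)
\[
P(x)=ab\,x^2\bigl(x^2+(a+b)x+1\bigr)+\bigl((b+1)x^2+1\bigr)^2,
\]
so $P(\alpha_\mathbf{p})=0$ takes the ``square-equals-$ab\cdot$product'' shape, which is naturally managed via the Frobenius $x\mapsto x^2$ in characteristic $2$. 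Combined with the CF relations $\alpha_\mathbf{p}=a+1/\alpha_1$, $\alpha_1=b+1/\alpha_2$ and the M\"obius identity $\alpha_\mathbf{p}=A_2\cdot\alpha_4$ coming from the $4$-block decomposition of $\mathbf{p}$, one reduces the target identity to a polynomial relation involving the deeper tails.

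The main obstacle is executing the degree bound on $F(p_k,q_k)$ by induction on $k$, propagating through the matrix recursion above. I would verify the base cases $k=0,1,2$ (routine) and expect a recursive identity expressing $F(p_{k+2},q_{k+2})$ as an explicit polynomial combination of $F(p_{k+1},q_{k+1})$, $F(p_k,q_k)$ and $\operatorname{tr}(A_k)$, reducing the inductive step to a degree comparison. The characteristic-$2$ identity $(u+v)^2=u^2+v^2$ is what keeps the algebraic degree of the resulting equation bounded at $4$: without it, iteration through the recursion would produce polynomials of unbounded degree in $\alpha_\mathbf{p}$, and the specific coefficients $A=ab+b^2+1$, $B=ab(a+b)$, $C=ab$ would not appear in closed form.
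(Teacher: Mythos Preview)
Your proposal is a plan rather than a proof: the decisive step---a recursive identity expressing $F(p_{k+2},q_{k+2})$ in terms of $F(p_{k+1},q_{k+1})$, $F(p_k,q_k)$ and $\operatorname{tr}(A_k)$---is only ``expected'', never derived. The Cayley--Hamilton relation $A_{k+2}=\operatorname{tr}(A_k)A_{k+1}A_k+A_{k+1}$ is correct, but $A_{k+1}A_k$ is the matrix of the word $\sigma^{k+1}(a)\sigma^k(a)$, which is not one of your building blocks; extracting a usable scalar recursion for $(p_k,q_k)$ from this requires tracking all four entries of $A_k$, and you do not show how the quartic form $F$ propagates through that. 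The factorization $P(x)=ab\,x^2(x^2+(a+b)x+1)+((b+1)x^2+1)^2$ is valid but is never connected to the matrix recursion; as written it floats free of the argument.

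The paper closes exactly the gap you leave open, and does so by choosing a different sequence of approximants. Instead of the full words $\sigma^n(a)$ (which are \emph{not} palindromes: $\sigma^2(a)=abaa$), it truncates the last letter and works with $W_n=(\sigma^n(a))''$. These satisfy $W_{n+1}=W_n,\varepsilon_n,W_n$ with $\varepsilon_n\in\{a,b\}$ alternating, so each $W_n$ is a palindrome. Palindromicity forces $\langle W_n''\rangle=\langle W_n'\rangle$, and then the continuant identities in characteristic $2$ collapse to the very clean scalar recursion
\[
u_{n+1}=\varepsilon_n u_n^2,\qquad v_{n+1}=\varepsilon_n u_n v_n+1,
\]
where $u_n=\langle W_n\rangle$, $v_n=\langle W_n'\rangle$. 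From this one obtains a closed form $X_n+1=\varepsilon_{n-1}u_n^2(a+b)$ for $X_n:=F(u_n,v_n)$, and the degree comparison $\deg X_n<4\deg v_n$ is then immediate. The palindrome trick is precisely the missing idea that turns your ``expected'' induction into a one-line computation; without it, your matrix recursion mixes all four entries and the induction does not close.
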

\begin{remark}
	An elementary proof shows that $P(x)$ has no solution in $\ff(t)$. Consequently,
	since $\alpha_\mathbf{p}$ is not quadratic, $P(x)$ is irreducible and 
	$\alpha_\mathbf{p}$ has degree $4$ over $\ff(t)$.
\end{remark}
\begin{remark}
	The theorem remains true when we replace $\mathbb{F}_2$ by any other field
	$K$ of characteristic $2$.
\end{remark}
\begin{remark}
	In fact we have proven that the continued fraction $\alpha_\mathbf{p}$ as a series in 
	the ring $\mathbb{F}_2((1/a,1/b))$ is algebraic over $\mathbb{F}_2(a,b)$.
\end{remark}

	In Section \ref{sec:notation} we recall notation and formulas for
	continued fractions.
 In Section \ref{sec:proof} we give the proof of Theorem \ref{thm:main} 
 and in a last section we make some comments about the link with Riccati
 differential equations.

\section{Notation and basic formulas for continued fractions}
\label{sec:notation}
We use the same notation as in \cite{lasjaunias2017}, which we recall in this 
section.

Let $W=w_1,w_2,\ldots,w_n$ be a sequence of variables over a ring $\mathbb{A}$.
We set $|W|=n$ for the length of the word $W$. We define the following operators
for the word $W$.
\begin{align*}
	W'&=w_2,w_3,\ldots, w_n \mbox{ or } W'=\emptyset \mbox{ if }|W|=1.\\
	W''&=w_1,w_2,\ldots, w_{n-1} \mbox{ or } W''=\emptyset\mbox{ if }|W|=1.\\
  W^*&=w_n,w_{n-1},\ldots,w_1.
\end{align*}
We consider the finite continued fraction associated to $W$ to be
\begin{equation}\label{eq:fncf}
	[W]=[w_1,\ldots, w_n]=w_1+\cfrac{1}{w_2+\cfrac{1}{w_3+\cfrac{1}{\ddots+\cfrac{1}{w_n}}}}
\end{equation}
The continued fraction $[W]$ is a quotient of multivariate polynomials in the variables
$w_1,w_2,\ldots,w_n$. These polynomials are called continuants
built on $W$. They are defined inductively as follows:

Set $\langle \emptyset\rangle =1$. If the sequence $W$ has only one element, then we have
$\langle W\rangle =W$. Hence, with the above notation, the continuants can be computed, 
recursively on the length $|W|$, by the following formula
\begin{equation}\label{eq:recur}
	\langle W\rangle =w_1\langle W'\rangle +\langle (W')'\rangle  \mbox{ for } |W|\geq 2.
\end{equation}
Thus, with this notation, for any finite word $W$, the finite continued fraction
$[W]$ satisfies
\begin{equation}\label{eq:frac}
[W]=\frac{\langle W\rangle }{\langle W'\rangle }.
\end{equation}
It is easy to prove by induction that $\langle W^*\rangle = \langle W\rangle$.
For any finite sequences $A$ and $B$ of variables over $\mathbb{A}$, defining
$A,B$ as the concatenation of the sequences, by induction on $|A|$, 
we also have the following generalization of \eqref{eq:recur}
\begin{equation}\label{eq:recur2}
\langle A,B\rangle =\langle A\rangle \langle B\rangle +\langle A''\rangle \langle B'\rangle .
\end{equation}
Using induction of $|W|$, we have the following classical identity
\begin{equation}\label{eq:pq}
	\langle W\rangle \langle (W')''\rangle -\langle W'\rangle \langle W''\rangle =(-1)^{|W|} \mbox{ for } |W|\geq 2.
\end{equation}


\section{Proof of Theorem \ref{thm:main}}\label{sec:proof}

 For $n\geq 0$, set 
$$W_n=(\sigma^n(a))''.$$
We have $W_0=\emptyset$, $W_1=a$, $W_2=aba$, etc., and $|W_n|=2^n-1$.

We will prove that $P([W_n])$ converges to $0$ .
For this, we need to following two lemmas.
\begin{lem}\label{lem:sym}
	For all $n\geq 0$, 
	\begin{equation}\label{eq:sym}
	W_{n+1}=W_n,\varepsilon_n, W_n 
	\end{equation}
	where $\varepsilon_n=a$ if $n$ is even and $\varepsilon_n=b$ if $n$ is odd.
	In consequence, for $n\geq 0$, we have
	$$W_n=W_n^*.$$
\end{lem}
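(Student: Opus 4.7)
The plan is to proceed by induction on $n$, guided by the factorization $\sigma(a) = ab$, which yields $\sigma^{n+1}(a) = \sigma^n(\sigma(a)) = \sigma^n(a)\,\sigma^n(b)$. The key auxiliary observation is that $\sigma^n(a)$ and $\sigma^n(b)$ agree on their first $2^n - 1$ letters and differ only in the last one. More precisely, writing $\bar{a} = b$ and $\bar{b} = a$, I would first establish by induction on $n \geq 0$ that
\[
\sigma^n(a) = W_n\, \varepsilon_n \qquad \text{and} \qquad \sigma^n(b) = W_n\, \bar{\varepsilon}_n,
\]
where $\varepsilon_n = a$ for $n$ even and $\varepsilon_n = b$ for $n$ odd. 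The base case $n=0$ is immediate since $W_0 = \emptyset$. For the inductive step, one applies $\sigma$ to both words: since $\sigma(a) = ab$ and $\sigma(b) = aa$ share the first letter $a$ and differ in the second, the stated structure propagates, with $\varepsilon$ switching parity because applying $\sigma$ replaces the last letter $x \in \{a,b\}$ by the second letter of $\sigma(x)$, which is precisely $\bar{x}$.

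With this in hand, one computes
\[
\sigma^{n+1}(a) = \sigma^n(a)\, \sigma^n(b) = W_n\, \varepsilon_n\, W_n\, \bar{\varepsilon}_n,
\]
a word of length $2^{n+1}$. Its last letter is $\bar{\varepsilon}_n = \varepsilon_{n+1}$, consistent with the parity alternation, and dropping it yields $W_{n+1} = W_n\, \varepsilon_n\, W_n$, which is exactly \eqref{eq:sym}. The palindrome assertion then follows by a trivial second induction: $W_0 = \emptyset$ is a palindrome, and if $W_n^* = W_n$ then $W_{n+1}^* = (W_n\, \varepsilon_n\, W_n)^* = W_n^*\, \varepsilon_n\, W_n^* = W_{n+1}$.

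No deep obstacle is expected; the only care needed is the bookkeeping of the two parallel words $\sigma^n(a)$ and $\sigma^n(b)$ and the parity of $n$. Proving the stronger statement about $\sigma^n(a)$ \emph{and} $\sigma^n(b)$ in tandem is what makes the induction fall out cleanly; attempting the induction on $\sigma^n(a)$ alone would force one to reintroduce the sibling sequence anyway.
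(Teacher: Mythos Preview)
Your proof is correct. The route differs from the paper's: the paper inducts directly on \eqref{eq:sym}, writing $W_{n+2}=(\sigma(\sigma^{n+1}(a)))''=\sigma(W_{n+1})\,a$ (applying $\sigma$ on the \emph{outside}) and then expanding via the inductive hypothesis $W_{n+1}=W_n,\varepsilon_n,W_n$ together with $\sigma(\varepsilon_n)=a\,\varepsilon_{n+1}$. You instead apply $\sigma^n$ on the \emph{inside}: the factorization $\sigma^{n+1}(a)=\sigma^n(a)\,\sigma^n(b)$ reduces the question to knowing both $\sigma^n(a)$ and $\sigma^n(b)$, which is why you carry the auxiliary paired induction $\sigma^n(a)=W_n\,\varepsilon_n$, $\sigma^n(b)=W_n\,\bar{\varepsilon}_n$. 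The paper's argument is marginally shorter, being a single induction on the target identity with no sibling sequence; your version costs that extra bookkeeping but makes the palindromic structure more transparent, since $W_{n+1}=W_n\,\varepsilon_n\,W_n$ is read off directly from the concatenation $\sigma^n(a)\,\sigma^n(b)$ once one knows the two factors share their first $2^n-1$ letters.
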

\begin{proof}
	We prove by induction. Suppose identity \eqref{eq:sym} holds for $n$, then
	\begin{align*}
		W_{n+2}&= \sigma(\sigma^{n+1}(a))''\\
		&=\sigma(W_{n+1})a\\
		&=\sigma(W_n),\sigma(\varepsilon_n),\sigma(W_n)a\\
		&=W_{n+1},\varepsilon_{n+1},W_{n+1}.
	\end{align*}
	In the last equality, we use the fact that $\sigma(\varepsilon_n)=a
	\varepsilon_{n+1}$.
\end{proof}

For $n\geq 1$, define $u_n=\langle W_n \rangle$ and $v_n=\langle W_n'\rangle$,
so that $$[W_n]=\frac{u_n}{v_n}.$$
\begin{lem}\label{lem:uv}
	For all $n\geq 1$, we have
	\begin{align*}
		u_{n+1}&=\varepsilon_n u_n^2,\\
		v_{n+1}&=\varepsilon_n u_n v_n+1.\\
	\end{align*}
\end{lem}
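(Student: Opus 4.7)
The plan is to combine the palindromic decomposition of Lemma \ref{lem:sym} with the concatenation identity \eqref{eq:recur2} and the classical determinant identity \eqref{eq:pq}, using characteristic $2$ to collapse the doubled terms that naturally arise.

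For the first identity, I would decompose $W_{n+1}=W_n\cdot(\varepsilon_n,W_n)$ and apply \eqref{eq:recur2}, obtaining
$$u_{n+1}=\langle W_n\rangle\,\langle \varepsilon_n,W_n\rangle+\langle W_n''\rangle\,\langle W_n\rangle.$$
Then \eqref{eq:recur} (or a second use of \eqref{eq:recur2}) supplies $\langle \varepsilon_n,W_n\rangle=\varepsilon_n u_n+v_n$. The palindromy $W_n=W_n^*$ from Lemma \ref{lem:sym} gives $W_n''=(W_n')^*$, and since $\langle X^*\rangle=\langle X\rangle$ this forces $\langle W_n''\rangle=v_n$. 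Substituting and cancelling the doubled term $2u_n v_n=0$ yields $u_{n+1}=\varepsilon_n u_n^2$.

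For the second identity, observe that $W_{n+1}'=W_n'\cdot(\varepsilon_n,W_n)$ as words. An analogous split via \eqref{eq:recur2} produces
$$v_{n+1}=v_n(\varepsilon_n u_n+v_n)+u_n\,\langle (W_n')''\rangle=\varepsilon_n u_n v_n+v_n^2+u_n\,\langle (W_n')''\rangle.$$
To eliminate the remaining unknown continuant, I apply \eqref{eq:pq} to $W_n$: in characteristic $2$ it reads $u_n\,\langle (W_n')''\rangle=v_n\,\langle W_n''\rangle+1=v_n^2+1$, where palindromy is again used. Plugging back in, the two copies of $v_n^2$ cancel and we are left with $v_{n+1}=\varepsilon_n u_n v_n+1$.

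The main obstacle is really just the bookkeeping of the $'$ and $''$ operators, together with correctly invoking palindromy at the two crucial places where $\langle W_n''\rangle$ must be replaced by $v_n$; once these two identifications are made, characteristic $2$ does the rest. The base case $n=1$, where $W_n'=\emptyset$ makes several of the above continuants degenerate and \eqref{eq:pq} does not directly apply, requires a separate verification, which is immediate from $u_1=a$, $v_1=1$.
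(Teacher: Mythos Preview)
Your argument is correct and coincides with the paper's own proof: the same split $W_{n+1}=W_n,\varepsilon_n,W_n$ via \eqref{eq:recur2}, the same use of palindromy to replace $\langle W_n''\rangle$ by $v_n$, and the same invocation of \eqref{eq:pq} to collapse $v_n^2+u_n\langle(W_n')''\rangle$ to $1$ in characteristic~$2$. Your remark that the case $n=1$ needs a separate check (since $|W_1|=1$ makes \eqref{eq:pq} inapplicable) is a point of care the paper's proof silently passes over.
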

\begin{proof}
	We use Lemma \ref{lem:sym}, identity \eqref{eq:recur2}, and the fact
	that we are in characteristic $2$.

	\begin{align*}
		u_{n+1}&=\langle W_{n+1}\rangle \\
		&=\langle W_n,\varepsilon_n,W_n\rangle \\
		&=\langle W_n\rangle \langle \varepsilon_n,W_n\rangle +\langle W_n''\rangle \langle W_n\rangle \\ 
		&=\langle W_n\rangle (\varepsilon_n \langle W_n\rangle +\langle W_n'\rangle )+\langle W_n''\rangle \langle W_n\rangle \\ 
		&=\langle W_n\rangle (\varepsilon_n \langle W_n\rangle +\langle W_n'\rangle )+\langle W_n'\rangle \langle W_n\rangle \\ 
		&=\langle W_n\rangle \varepsilon_n \langle W_n\rangle \\
		&=\varepsilon_n u_n^2.
	\end{align*}
	\begin{align*}
		v_{n+1}&=\langle W_{n+1}'\rangle \\
		&=\langle W_n',\varepsilon_n,W_n\rangle \\
		&=\langle W_n'\rangle \langle \varepsilon_n,W_n\rangle +\langle (W_n')''\rangle \langle W_n\rangle \\
		&=\langle W_n'\rangle (\varepsilon_n\langle W_n\rangle +\langle W_n'\rangle )+\langle (W_n')''\rangle \langle W_n\rangle \\
		&=\varepsilon_n u_nv_n+\langle W_n'\rangle \langle W_n'\rangle +\langle (W_n')''\rangle \langle W_n\rangle \\
		&=\varepsilon_n u_nv_n+1.
	\end{align*}
	In the last step, we also use \eqref{eq:pq}, and the symmetry of $W_n$. \end{proof}
\begin{proof}[Proof of Theorem \ref{thm:main}]
	For $n\geq 1$, set $$X_n=Au_n^4+Bu_n^3 v_n+Cu_n^2v_n^2+v_n^4,$$
	so that $P(u_n/v_n)=X_n/v_n^4$.

	Using Lemma \ref{lem:uv} and noticing that 
	$\varepsilon_n^2+ab=\varepsilon_n(a+b)$ , we obtain 
	\begin{equation}\label{eq:x}
	X_{n+1}+1=\varepsilon^4_n u_n^4 (X_n+1)+\varepsilon_n^3u_n^4(a+b)(1+abu_n^2).
	\end{equation}
	We observe taht 
	$$X_1+1=a^3(a+b)=\varepsilon_0u_1^2(a+b),$$
	this allows us, using \eqref{eq:x} and the fact that $\varepsilon_n\varepsilon_{n-1}=ab$,
	to get by induction that for all $n\geq 1$
	\begin{equation}\label{eq:x2}
		X_n+1=\varepsilon_{n-1}u_n^2(a+b).
	\end{equation}
	Therefore 
	$$P\left(\frac{u_n}{v_n}\right)=\frac{\varepsilon_{n-1}u_n^2(a+b)+1}{v_n^4}$$
	This is a quotient of polynomials in $\mathbb{F}_2[t]$, and it can be easily seen from
	Lemma \ref{lem:uv} that the degree in $t$ of the denominator minus that of
	the numerator tends to plus infinity as $n$ goes to infinity.
	And therefore the sequence converges to $0$ in $\mathbb{F}_2((1/t))$.
\end{proof}

\section{Link with Riccati differential equations}\label{sec:last}
In both cases, the Prouhet-Thue-Morse  and the period-doubling
continued fraction, a particular choice of the pair $(a,b)$ in $\ff[t]^2$, 
brings us back to an investigation undertaken 45 years ago. Indeed, in these two
cases, taking $(a,b)=(t,t+1)$, all the partial quotients of these continued 
fractions are $t$ or $t+1$. In 1977, Baum and Sweet \cite{Baum1977S}
considered all the infinite continued fractions in 
$\mathbb{F}_2((1/t))$ whose partial quotients have all degree one.

They considered the subset $D$ of irrational continued fractions $\alpha$ in 
$\mathbb{F}_2((1/t))$  such that
    $$\alpha=[0;a_1,a_2,\ldots,a_n,\ldots]$$ 
where $\deg(a_i)=1$ (i.e., $a_i=t$ or $t+1$) for $i\geq 1$. 
Let $P$ the subset of $\mathbb{F}_2((1/t))$ containing all $\alpha$ such that 
$|\alpha|<1$. 
Hence $P$ contains $D$. We have :

\begin{thm}\label{thm:bs}[Baum-Sweet, 1977]
	An element $\alpha\in P$ is in $D$ if and only if $\alpha$ satisfies
	$$\alpha^2+t\alpha+1=(1+t)\beta^2$$
	for some $\beta\in P$.
\end{thm}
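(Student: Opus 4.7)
The plan is to reduce both directions to iterating a single identity. Writing $\alpha=1/(a_1+\alpha_1)$ and substituting into $\alpha^2+t\alpha+1$ yields, in characteristic $2$,
\begin{equation*}
\alpha^2+t\alpha+1 \;=\; \alpha^2\bigl[(\alpha_1^2+t\alpha_1+1)+(a_1^2+ta_1)\bigr].
\end{equation*}
The key observation is that $a_1^2+ta_1=\epsilon_1(1+t)$ with $\epsilon_1\in\{0,1\}$ if and only if $a_1\in\{t,t+1\}$, namely $\epsilon_1=0$ for $a_1=t$ and $\epsilon_1=1$ for $a_1=t+1$.

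For the forward implication, assuming $\alpha\in D$, I iterate the identity. Setting $\Pi_n=\alpha\alpha_1\cdots\alpha_{n-1}$, after $n$ shifts one obtains
\begin{equation*}
\alpha^2+t\alpha+1 \;=\; (\alpha_n^2+t\alpha_n+1)\,\Pi_n^2 \;+\; (1+t)\sum_{i=1}^n\epsilon_i\,\Pi_i^2.
\end{equation*}
Because every partial quotient has degree $1$, $|\Pi_n|\to 0$ and the first term drops out in the limit. Since in characteristic $2$ a sum of squares equals the square of the sum, the right-hand side equals $(1+t)\beta^2$ with $\beta=\sum_{i\geq 1,\,a_i=t+1}\Pi_i\in P$, as required.

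For the converse, suppose $\alpha,\beta\in P$ satisfy the equation; the strategy is induction on partial quotients. The crucial step is forcing $\deg a_1=1$ by a norm count: if $\deg a_1\geq 2$, i.e.\ $|\alpha|\leq|t|^{-2}$, then $|\alpha^2|$ and $|t\alpha|$ are both strictly less than $1$, so $|\alpha^2+t\alpha+1|=1$; this gives $|\beta|^2=|t|^{-1}$, impossible since $|\beta|^2$ must be an even integer power of $|t|$. Hence $a_1\in\{t,t+1\}$, and the identity rearranges to $\alpha_1^2+t\alpha_1+1=(1+t)\beta_1^2$ with $\beta_1:=\beta/\alpha+\epsilon_1$; the trivial estimate $|\alpha_1^2+t\alpha_1+1|\leq 1$ then yields $\beta_1\in P$. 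Iterating, every partial quotient of $\alpha$ has degree $1$ and $\alpha\in D$.

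The delicate point will be the parity argument forcing $\deg a_1=1$: the mismatch between the valuation $0$ of $\alpha^2+t\alpha+1$ (when $\deg a_1\geq 2$) and the necessarily odd $|t|$-valuation of any $(1+t)\beta^2$ is what rules out partial quotients of larger degree and drives the whole recursion.
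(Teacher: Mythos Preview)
The paper does not prove this theorem; it is quoted from Baum and Sweet (1977) and serves only as input for the derivation of the Riccati characterization in the following theorem. There is thus no ``paper's proof'' to compare against.

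Your argument stands on its own and is correct. The substitution identity
\[
\alpha^2+t\alpha+1=\alpha^2\bigl[(\alpha_1^2+t\alpha_1+1)+(a_1^2+ta_1)\bigr]
\]
is verified by a direct computation using $1=\alpha^2(a_1+\alpha_1)^2$ in characteristic $2$; the telescoping in the forward direction converges since $|\Pi_n|=|t|^{-n}$ when all partial quotients have degree~$1$; and in the converse the valuation mismatch between $|\alpha^2+t\alpha+1|=1$ (when $\deg a_1\ge 2$) and $|(1+t)\beta^2|=|t|^{1+2m}$ is exactly the right obstruction. One point you leave implicit but which is worth stating: the same parity obstruction, applied at stage $n$, also excludes $\alpha_n=0$ (otherwise $1=(1+t)\beta_n^2$ would force $|\beta_n|^2=|t|^{-1}$), so the continued fraction of $\alpha$ never terminates and $\alpha$ is genuinely irrational, as the definition of $D$ requires. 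A second minor remark: your ``if and only if'' for $a_1^2+ta_1\in\{0,1+t\}$ is literally true only after restricting to $\deg a_1\ge 1$, since $a_1=0$ and $a_1=1$ also satisfy it; this is harmless here because $a_1$ is a partial quotient.
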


In order to obtain another formulation of Theorem \ref{thm:bs}, we will now use
formal differentiation in $\mathbb{F}_2((1/t))$. Continued fractions and formal 
differentiation in power series fields are related, these are the main tools in
Diophantine approximation (see \cite{lasjaunias2000}, p.221-225). We recall that
in characteristic $2$, which is the case considered here, an element has 
derivative zero if and only if this element is a square. Moreover the derivative
of an element is a square (hence the second derivative is zero) . 
We have the following theorem.

\begin{thm}\label{thm:3}
	An element $\alpha\in P$ is in $D$ if and only if $\alpha$ satisfies
	$$(\alpha\cdot t(t+1))'=\alpha^2+1.$$
\end{thm}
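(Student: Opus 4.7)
The plan is to show that the differential condition $(\alpha\cdot t(t+1))'=\alpha^2+1$ is an equivalent reformulation of the Baum-Sweet criterion $\alpha^2+t\alpha+1=(1+t)\beta^2$ with $\beta\in P$ from Theorem~\ref{thm:bs}; then Theorem~\ref{thm:3} follows immediately. Both implications will rest on the two characteristic-$2$ facts recalled in the paper: derivatives of squares vanish, and the derivative of any element of $\ff((1/t))$ is itself a square, so every second derivative is zero.

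For the forward direction, I would differentiate the Baum-Sweet identity. Since $(\alpha^2)'=(\beta^2)'=0$ and $(t^2+t)'=1$ in characteristic $2$, this yields $\alpha+t\alpha'=\beta^2$. Substituting back into the original identity gives $(1+t)(\alpha+t\alpha')=\alpha^2+t\alpha+1$, which expands and simplifies to $t(t+1)\alpha'=\alpha^2+\alpha+1$. Adding $\alpha$ to both sides and recognizing the left-hand side as $(\alpha\cdot t(t+1))'$ gives the target identity.

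For the reverse direction, I would expand the hypothesis to $t(t+1)\alpha'=\alpha^2+\alpha+1$ and introduce $\gamma:=\alpha+t\alpha'$. A direct manipulation using this identity produces $(1+t)\gamma=\alpha^2+t\alpha+1$, so the task reduces to writing $\gamma=\beta^2$ with $\beta\in P$. For the square root, I compute $\gamma'=\alpha'+\alpha'+t\alpha''=t\alpha''$; since every derivative in $\ff((1/t))$ is a square, $\alpha''=0$, whence $\gamma'=0$ and the vanishing-derivative criterion gives $\gamma=\beta^2$ for a unique $\beta\in\ff((1/t))$. For the size bound, $\alpha\in P$ forces $|\alpha|<1$ and $|\alpha'|\le|t|^{-2}$, so $|\beta|^2=|\gamma|<1$ and $\beta\in P$. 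Theorem~\ref{thm:bs} then yields $\alpha\in D$.

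The main obstacle I anticipate is the square-root step in the reverse direction: it is the only place where a nontrivial structural fact about $\ff((1/t))$ in characteristic $2$ is invoked, namely that $\alpha+t\alpha'$ is forced to admit a square root in the field by the vanishing of its derivative. Everything else is routine algebraic manipulation made possible by the disappearance of~$2$.
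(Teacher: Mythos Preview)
Your proof is correct and follows essentially the same route as the paper: both directions reduce to the Baum--Sweet criterion of Theorem~\ref{thm:bs} via formal differentiation in characteristic~$2$. The only difference is that the paper recognizes your quantity $\gamma=\alpha+t\alpha'$ as $(t\alpha)'$, so that it is \emph{a priori} a derivative and hence a square, which spares the separate computation $\gamma'=t\alpha''=0$.
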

\begin{proof}
	If $$\alpha^2+t\alpha+1=(1+t)\beta^2$$ 
	then by derivation we get
	$$(t\alpha)'=\beta^2.$$
	Consequently, we have 
	$$\alpha^2+1=t\alpha+(1+t)(t\alpha)'=(\alpha\cdot t(t+1))'.$$
	Conversely, if 
	$$(\alpha\cdot t(t+1))'=\alpha^2+1.$$
	then 
	$$
	\alpha^2+t\alpha+1= (\alpha\cdot t(t+1))'+t\alpha=(t+1)(\alpha\cdot t)'
	=(t+1)\beta^2,
	$$
	for some $\beta\in P$ since $(t\alpha)'$ is a square in $P$.
\end{proof}
Thus the Riccati differential equation
\begin{equation}\label{eq:ric}
	(x\cdot t(t+1))'=x^2+1 \tag{R0}
\end{equation}
is characteristic of infinite continued fraction expansions in $\mathbb{F}_2((1/t))$,
having all partial quotients of degree one.
Indeed, there exists a direct proof, without Theorem \ref{thm:bs}, showing
that a solution in $\mathbb{F}_2((1/t))$ of \eqref{eq:ric} is irrational and has
all partial quotients of degree one (see \cite{lasjaunias2000}, p. 225).

Now we turn to elements in $\mathbb{F}_2((1/t))$ which are algebraic over $\ff(t)$ of degree $d\geq 1$.
If $\alpha$ is such an element, we have (see \cite{lasjaunias2000}, p. 221)
\begin{equation}
	\alpha'= a_0+a_1\alpha+\cdots+a_{d-1}\alpha^{d-1}
\end{equation}
where the coefficients $a_i$ belong to $\ff(t)$.

Concerning the algebraic continued fraction $\alpha_\mathbf{t}=\CF(\mathbf{t})\in \mathbb{F}_2((1/t))$
associated with the Prouhet-Thue-Morse sequence, we have observed that it
satisfies the Riccati differential equation (see \cite{Bugeaud2022H},
Proposition 2.4):
\begin{equation}\label{eq:ric2}
	(ab(a+b) x)'=(ab)'(1+x^2).\tag{R}
\end{equation}
In fact, the continued fraction $\CF(\mathbf{p})$ (also algebraic of degree
$4$), associated with the period-doubling sequence satisfies the same 
Riccati differential equation.

\begin{prop}
	The continued fraction $\alpha_\mathbf{p}=\CF(\mathbf{p})$ satisfies the Riccati differential
	equation \eqref{eq:ric2}
\end{prop}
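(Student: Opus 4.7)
The plan is to derive the Riccati equation directly from the quartic relation $P(\alpha_\mathbf{p})=0$ furnished by Theorem \ref{thm:main}, by formal differentiation in characteristic $2$. The crucial observation I will exploit is that in characteristic $2$ the derivative kills every even power of $\alpha_\mathbf{p}$, so differentiating $P(\alpha_\mathbf{p})=0$ produces a linear equation in $\alpha_\mathbf{p}'$ whose coefficient is $B$.

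Concretely, I would start from
\begin{equation*}
A\alpha_\mathbf{p}^4+B\alpha_\mathbf{p}^3+C\alpha_\mathbf{p}^2+1=0,
\end{equation*}
where $A=ab+b^2+1$, $B=ab(a+b)$, $C=ab$. Applying formal differentiation and using that $(\alpha^{2k})'=0$ while $(\alpha^{2k+1})'=\alpha^{2k}\alpha'$ in characteristic $2$, the product rule yields
\begin{equation*}
A'\alpha_\mathbf{p}^4+B'\alpha_\mathbf{p}^3+B\alpha_\mathbf{p}^2\alpha_\mathbf{p}'+C'\alpha_\mathbf{p}^2=0.
\end{equation*}
I then rewrite $B\alpha_\mathbf{p}'=(B\alpha_\mathbf{p})'+B'\alpha_\mathbf{p}$ (characteristic $2$) to obtain, after multiplying the displayed equation by $1/\alpha_\mathbf{p}^2$ (which is allowed since $\alpha_\mathbf{p}\neq 0$) and rearranging,
\begin{equation*}
(B\alpha_\mathbf{p})'=A'\alpha_\mathbf{p}^2+C'.
\end{equation*}

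The final step is the identification of $A'$ and $C'$ with $(ab)'$. Since $A+C=b^2+1$ and $(b^2+1)'=0$ in characteristic $2$, we have $A'=C'=(ab)'$, so the equation above reads
\begin{equation*}
(ab(a+b)\,\alpha_\mathbf{p})'=(ab)'(1+\alpha_\mathbf{p}^2),
\end{equation*}
which is exactly \eqref{eq:ric2}. There is no real obstacle here: the only subtlety to watch is the careful use of the characteristic $2$ product/power rules, and the arithmetic identity $A+C=b^2+1$ which makes the two inhomogeneous terms collapse into the single factor $(ab)'$.
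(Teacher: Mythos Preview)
Your argument is correct and follows essentially the same route as the paper: differentiate the quartic relation $A\alpha_\mathbf{p}^4+B\alpha_\mathbf{p}^3+C\alpha_\mathbf{p}^2+1=0$, use the characteristic~$2$ rules to reduce to $B\alpha_\mathbf{p}'+B'\alpha_\mathbf{p}=A'\alpha_\mathbf{p}^2+C'$, and then observe $A'=C'=(ab)'$. Your justification of $A'=C'$ via $A+C=b^2+1$ and the explicit mention that dividing by $\alpha_\mathbf{p}^2$ is legitimate are details the paper leaves implicit.
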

\begin{proof}
	We know from Theorem \ref{thm:main} that
\begin{equation}\label{eq:pd}
	A \alpha_\mathbf{p}^4+B\alpha_\mathbf{p}^3+C\alpha_\mathbf{p}^2+1=0
\end{equation}
with
\begin{equation*}
	A=ab+b^2+1,\quad
	B=ab(a+b),\quad
	C=ab.
\end{equation*}
Hence, by straightforword derivation of \eqref{eq:pd}, we get 
$$
	B\alpha_\mathbf{p}'+B'\alpha_\mathbf{p}=A'\alpha_\mathbf{p}^2+C'.
$$
	Therefore, observing that $A'=C'=(ab)'$, we obtain
$$
	(ab(a+b) \alpha_\mathbf{p})'= (B\alpha_\mathbf{p})'
	=A'\alpha_\mathbf{p}^2+C'
	=(ab)'(1+\alpha_\mathbf{p}^2). \qedhere
	$$
\end{proof}
Note that for the pair $(a,b)$=$(t,t+1)$, equation 
\eqref{eq:ric2} reduces to \eqref{eq:ric}, the one stated in Theorem 
\ref{thm:3}. 
This was the way, taking in consideration differential aspects for $\CF(\mathbf{t})$ 
and $\CF(\mathbf{p})$, knowing the equation \eqref{eq:ric} in the basic case, 
and inspired by the particular case published in \cite{Hu2022H}, 
that allowed us to guess the coefficients for the algebraic equation satisfied 
by $\CF(\mathbf{p})$.

We have observed that the two particularly simple sequences, 
the Prouhet-Thue-Morse sequence and the period-doubling sequence, are close in many different ways, and not only in their generation process. 
This leads to the intuition that other $2$-automatic sequences, defined
on an alphabet of two letters, could have a representation as an algebraic continued fraction in $\mathbb{F}_2((1/t))$ (not necessarily of degree $4$), as it happens for the two famous ones considered up to now.

\bibliographystyle{plain}

\bibliography{article}

\begin{thebibliography}{1}

\bibitem{Allouche2003Sh}
Jean-Paul Allouche and Jeffrey Shallit.
\newblock {\em Automatic sequences. Theory, Applications, Generalizations}.
\newblock Cambridge University Press, Cambridge, 2003.

\bibitem{Baum1977S}
Leonard~E. Baum and Melvin~M. Sweet.
\newblock Badly approximable power series in characteristic {$2$}.
\newblock {\em Ann. of Math. (2)}, 105(3):573--580, 1977.

\bibitem{Bugeaud2022H}
Yann Bugeaud and Guo-Niu Han.
\newblock The {T}hue-{M}orse continued fractions in characteristic $2 $ are
  algebraic.
\newblock {\em arXiv preprint arXiv:2203.02213}, 2022.

\bibitem{Hu2022H}
Yining Hu and Guo-Niu Han.
\newblock On the algebraicity of {T}hue-{M}orse and period-doubling continued
  fractions.
\newblock {\em Acta Arithmetica}, 2022 (to appear).

\bibitem{lasjaunias2000}
Alain Lasjaunias.
\newblock A survey of diophantine approximation in fields of power series.
\newblock {\em Monatshefte f{\"u}r Mathematik}, 130(3):211--229, 2000.

\bibitem{lasjaunias2017}
Alain Lasjaunias.
\newblock Continued fractions.
\newblock {\em arXiv preprint arXiv:1711.11276}, 2017.

\end{thebibliography}

\end{document}